\title[The duality on wave fronts]{%
   The duality between singular points and\\
   inflection points on wave fronts}
\date{May 11, 2010}
\theoremstyle{plain}
 \newtheorem{theorem}{Theorem}[section]
 \newtheorem{introtheorem}{Theorem}
 \renewcommand{\theintrotheorem}{\Alph{introtheorem}}
 \newtheorem{introproposition}[introtheorem]{Proposition}
 \newtheorem{introcorollary}[introtheorem]{Corollary}
 \newtheorem*{theorem*}{Theorem}
 \newtheorem*{lemma*}{Lemma}
 \newtheorem{proposition}[theorem]{Proposition}
 \newtheorem{fact}[theorem]{Fact}
 \newtheorem{fact*}{Fact}
\theoremstyle{remark}
 \newtheorem{definition}[theorem]{Definition}
 \newtheorem{remark}[theorem]{Remark}
 \newtheorem*{remark*}{Remark}
 \newtheorem{example}[theorem]{Example}
 \newtheorem*{added}{Added in Proof}
\numberwithin{equation}{section}
\numberwithin{figure}{section}
\renewcommand{\theenumi}{{\rm(\arabic{enumi})}}
\renewcommand{\labelenumi}{\theenumi}
\newcommand{\R}{\boldsymbol{R}}
\newcommand{\C}{\boldsymbol{C}}
\newcommand{\K}{\boldsymbol{K}}
\newcommand{\zv}{\boldsymbol{0}}
\newcommand{\G}{\mathcal{G}}
\newcommand{\trans}[1]{{\vphantom{#1}}^t{#1}}
\renewcommand{\phi}{\varphi}
\newcommand{\pmt}[1]{{\begin{pmatrix} #1  \end{pmatrix}}}
\author{Kentaro Saji}
\address[Saji]{
  Department of Mathematics,
  Faculty of Educaton,
  Gifu University,  Yanagido 1-1, Gifu 501-1193, Japan}
\email{ksaji@gifu-u.ac.jp}
\author{Masaaki Umehara}
\address[Umehara]{%
   Department of Mathematics, Graduate School of Science,
   Osaka University,
   Toyonaka, Osaka 560-0043,
   Japan
}
\email{umehara@math.sci.osaka-u.ac.jp}
\author{Kotaro Yamada}
\address[Yamada]{%
Department of Mathematics,
   Tokyo Institute of Technology,
   O-okayama, Meguro, Tokyo 152-8551, Japan}
\email{kotaro@math.titech.ac.jp}
\begin{document}
\maketitle
\begin{abstract}
 In the previous paper,
 the authors gave criteria for $A_{k+1}$-type singularities on
 wave fronts. 
 Using them, we show in this paper that there is a duality between
 singular points and inflection points on wave fronts  in the projective
 space.
 As an application, we show that the algebraic sum of $2$-inflection
 points  (i.e.\ godron points)
 on an immersed surface in the real projective space is equal to the
 Euler number of $M_-$. 
 Here $M^2$ is a compact orientable 2-manifold, and $M_-$ is the open
 subset of $M^2$ where
 the Hessian of $f$ takes  negative values. 
 This is a generalization of Bleecker and Wilson's formula \cite{BW}
 for immersed surfaces in the affine $3$-space.
\end{abstract}

%%%%%%%%%%%%%%%%%%%%%%%%%%%%%%%%%%%%%%%%%%%%%%%%
\section{Introduction}
We denote by $\K$ the real number field $\R$ or the complex number field
$\C$. Let $n$ and $m$ be positive integers.
A map $F\colon{}\K^n\to \K^m$ is called {\em $\K$-differentiable\/}
if it is a $C^\infty$-map when $\K=\R$, 
and is a holomorphic map when $\K=\C$.
Throughout this paper, 
we denote by $P(V)$  the $\K$-projective space associated 
to a vector space $V$ over $\K$ and let
$\pi:V\to P(V)$ be the canonical projection.

Let $M^{n}$ and $N^{n+1}$ be 
$\K$-differentiable manifolds of dimension $n$ 
and of dimension $n+1$, 
respectively.
The projectified $\K$-cotangent bundle
\[
  P(T^*N^{n+1}):=\bigcup_{p\in N^{n+1}} P(T^*_pN^{n+1})
\]
 has a canonical $\K$-contact structure.
A $\K$-differentiable map $f : M^n
\to N^{n+1}$ is called a {\it frontal} 
if $f$ lifts to 
a $\K$-isotropic map $L_f$, i.e.,
a $\K$-differentiable map $L_f : M^n \to P(T^*N^{n+1})$
such that the image $dL_f(TM^n)$ of the $\K$-tangent bundle 
$TM^n$ lies in the contact hyperplane 
field on $P(T^*N^{n+1})$.
Moreover, $f$ is called a {\it wave front} or a {\it front} if 
it lifts to a $\K$-isotropic immersion $L_f$.
(In this case, $L_f$ is called a {\it Legendrian immersion}.)
Frontals (and therefore fronts) generalize 
immersions, as they allow for
singular points. 
A frontal $f$ is said to be {\it co-orientable}
if its $\K$-isotropic lift $L_f$ can lift up to a
$\K$-differentiable map into the $\K$-cotangent 
bundle $T^*N^{n+1}$, 
otherwise it is said to be {\it non-co-orientable}.
It should be remarked that, when $N^{n+1}$ is a Riemannian
manifold, 
a front $f$ is co-orientable if and only if 
there is a globally defined unit normal vector field 
$\nu$ along $f$. 

Now we set $N^{n+1}=\K^{n+1}$.
Suppose that a 
$\K$-differentiable map
$F:M^{n}\to \K^{n+1}$ is a frontal. 
Then, for each $p\in M^{n}$,
there exist a neighborhood $U$ of $p$ and a map
\[
   \nu:U\longrightarrow (\K^{n+1})^*\setminus \{\zv\}
\]
into the dual vector space $(\K^{n+1})^*$ of $\K^{n+1}$
such that the canonical pairing $\nu\cdot dF(v)$ vanishes 
for any $v\in TU$. 
We call $\nu$ a {\it local normal map\/} of the frontal $F$.
We set $\G:=\pi\circ\nu$, which is called a (local) {\it Gauss map\/} of
$F$. 
In this setting, $F$ is a front if and only if
\[
    L:=(F,\G):U\longrightarrow \K^{n+1}\times P\bigl((\K^{n+1})^*
\bigr)
\]
is an immersion. 
When $F$ itself is an immersion, it is, of course, a front.
If this is the case, for a fixed  local $\K$-differentiable
coordinate system $(x^1,\dots,x^{n})$ on $U$, 
we set
\begin{equation}\label{eq:nu}
 \nu_p:\K^{n+1}\ni v \longmapsto 
   \det(F_{x^1}(p),\dots,F_{x^{n}}(p),v)
   \in \K
   \qquad (p\in U),
\end{equation}
where $F_{x^j}:=\partial F/\partial x^j$ ($j=1,\dots,n$)
and \lq$\det$\rq\ is the determinant function on $\K^{n+1}$.
Then we get a $\K$-differentiable map
$
    \nu:U\ni p \longmapsto \nu_p\in (\K^{n+1})^*,
$
which gives a local normal map of $F$.

Now, we return to the case that $F$ is a front.
Then it is well-known that
the local Gauss map $\mathcal G$ 
induces a global map
\begin{equation}\label{eq:G}
     \G:M^{n}\longrightarrow P\bigl((\K^{n+1})^*\bigr)
\end{equation}
which is called the {\it affine Gauss map\/} of $F$.
(In fact, the Gauss map $\G$ depends only on the affine structure of 
$\K^{n+1}$.)

We set 
\begin{equation}\label{eq:h}
   h_{ij}:=\nu\cdot F_{x^ix^j}=-\nu_{x^i} \cdot F_{x^j}
      \qquad (i,j=1,\dots,n),
\end{equation}
where $\cdot$ is the canonical pairing between $\K^{n+1}$ and
$(\K^{n+1})^*$, and 
\[
  F_{x^ix^j}=\frac{\partial^2 F}{\partial x^i\partial x^j},
\quad 
  F_{x^j}=\frac{\partial F}{\partial x^j},\quad
  \nu_{x^i}=\frac{\partial \nu}{\partial x^i}.
\]
Then
\begin{equation}\label{eq:H}
    H:=\sum_{i,j=1}^{n}h_{ij}dx^i\, dx^j
     \qquad \left(
	     dx^i\, dx^j:=\frac12(dx^i\otimes dx^j+dx^j\otimes dx^i)
	    \right)
\end{equation}
gives a $\K$-valued symmetric tensor on $U$, which is called
the {\it Hessian form\/} 
of $F$ associated to $\nu$.
Here, the $\K$-differentiable function
\begin{equation}\label{eq:hessian1}
   h:=\det(h_{ij}):U\longrightarrow \K
\end{equation}
is called the {\it Hessian\/} of $F$.
A point $p\in M^{n}$ is called an {\it inflection point\/} of 
$F$ if it belongs to the zeros of $h$.
An inflection point $p$ is called {\it nondegenerate\/} if
the derivative $dh$ does not vanish at $p$.
In this case,  the set of inflection points  $I(F)$ 
consists of an embedded $\K$-differentiable hypersurface
of $U$ near $p$ and there exists a non-vanishing $\K$-differentiable
vector field $\xi$ along $I(F)$ such that $H(\xi,v)=0$ for all 
$v\in TU$.
Such a vector field $\xi$ is called an {\it asymptotic vector field\/}
along $I(F)$, 
and $[\xi]=\pi(\xi)\in P(\K^{n+1})$ is called the 
{\it asymptotic direction}.
It can be easily checked that the definition of inflection 
points and the nondegeneracy of inflection points are independent of
choice of $\nu$ and a local coordinate system. 

In Section~\ref{sec:prelim},
we shall define the terminology that
\begin{itemize}
\item a $\K$-differentiable vector field
$\eta$ along a $\K$-differentiable hypersurface $S$ of $M^n$
is {\it $k$-nondegenerate at} $p\in S$, and
\item
{\it $\eta$ meets $S$ at $p$ with multiplicity $k+1$.}
\end{itemize}
Using this new terminology, $p(\in I(F))$ is called an {\it
$A_{k+1}$-inflection point} if 
$\xi$ is $k$-nondegenerate at $p$ but does not meet $I(F)$ 
with multiplicity $k+1$.
In Section~\ref{sec:prelim}, we shall prove the following:
\begin{introtheorem}\label{thm:A}
 Let $F:M^{n}\to \K^{n+1}$ be an immersed 
 $\K$-differentiable hypersurface.
 Then $p\in M^{n}$ is an $A_{k+1}$-inflection point $(1\leq k\leq n)$
 if and only if the affine Gauss map $\mathcal G$
 has an $A_k$-Morin singularity at $p$.
 $($See the appendix of \cite{SUY3}
    for the definition of $A_k$-Morin singularities,
    which corresponds to $A_{k+1}$-points under the intrinsic
    formulation of singularities as in the reference given in
    Added in Proof.
 $)$
\end{introtheorem}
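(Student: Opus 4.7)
The plan is to reduce Theorem~\ref{thm:A} to the intrinsic characterization of $A_k$-Morin singularities recalled in \cite{SUY3}: a $\K$-differentiable map $\G$ between equidimensional manifolds has an $A_k$-Morin singularity at $p$ precisely when the singular set is locally the nondegenerate zero set of a function $\lambda$ and any vector field $\eta$ spanning $\ker d\G$ along that set satisfies $\eta^j\lambda(p)=0$ for $1\leq j\leq k-1$ while $\eta^k\lambda(p)\neq 0$. I therefore aim to show that, for the affine Gauss map $\G$ of an immersion $F$, the Hessian $h=\det(h_{ij})$ and any $\K$-differentiable extension of the asymptotic vector field $\xi$ play the roles of $\lambda$ and $\eta$ respectively, after which Theorem~\ref{thm:A} becomes a direct comparison of definitions.

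First I trivialize the projective target. Pick $e\in\K^{n+1}$ with $\nu(p)\cdot e\neq 0$ and replace $\nu$ by $\tilde\nu:=\nu/(\nu\cdot e)$ near $p$; this represents $\G$ as a map into the affine chart of $P\bigl((\K^{n+1})^*\bigr)$ dual to $e$. Using $\tilde\nu\cdot F_{x^j}\equiv 0$ together with the identity $\nu_{x^i}\cdot F_{x^j}=-h_{ij}$ from \eqref{eq:h}, a short matrix manipulation shows that the Jacobian determinant of this chart-expression of $\G$ equals $h$ up to a nowhere vanishing factor. In particular the singular set of $\G$ coincides locally with $I(F)$, and the condition $dh(p)\neq 0$ matches the nondegeneracy of the singular set of $\G$ at $p$.

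Second I identify the kernel direction. A vector $\xi=\sum\xi^i\partial_{x^i}\in T_pM^n$ belongs to $\ker d\G_p$ iff $d\tilde\nu_p(\xi)$ is a scalar multiple of $\tilde\nu(p)$; pairing this with the basis $F_{x^1}(p),\dots,F_{x^n}(p)$ of the hyperplane annihilated by $\nu(p)$ converts it to $\sum_i h_{ij}(p)\xi^i=0$ for all $j$, which is precisely the defining condition for an asymptotic vector at $p$. Hence $\ker d\G$ along $I(F)$ is spanned by the asymptotic vector field, and any $\K$-differentiable extension $\widetilde\xi$ of $\xi$ off $I(F)$ may serve as the vector field $\eta$ in the Morin criterion.

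Finally I unwind the Section~\ref{sec:prelim} terminology. Once it is in place, ``$\xi$ meets $I(F)$ at $p$ with multiplicity $k+1$'' should read $\widetilde\xi^j h(p)=0$ for $1\leq j\leq k$, and ``$\xi$ is $k$-nondegenerate at $p$'' should assert the vanishing $\widetilde\xi^j h(p)=0$ for $1\leq j\leq k-1$ in an extension-independent manner. The $A_{k+1}$-inflection condition then reads $\widetilde\xi^j h(p)=0$ for $j<k$ together with $\widetilde\xi^k h(p)\neq 0$, which is exactly the intrinsic $A_k$-Morin condition for $\G$. The principal obstacle is the invariance of these iterated Lie derivatives: the value $\widetilde\xi^j h(p)$ must be independent of the extension of $\xi$, of the chosen representative $\nu$, and of coordinates. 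Invariance in $\nu$ is immediate, since rescaling $\nu$ by a nonvanishing $\phi$ multiplies $h$ by $\phi^n$ and leaves the asymptotic direction unchanged; independence from the extension follows inductively, because if two extensions agree on $I(F)$ their difference vanishes there, so applying it to a function known to vanish on $I(F)$ produces a quantity that again vanishes on $I(F)$, and the induction propagates once $\widetilde\xi h,\dots,\widetilde\xi^{j-1}h$ have already been shown to vanish along $I(F)$.
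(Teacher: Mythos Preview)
Your first two paragraphs establish exactly what the paper's proof establishes: that $I(F)=S(\G)$, that nondegeneracy of the inflection set matches nondegeneracy of the singular set, and that the asymptotic direction equals the null direction of $\G$. The paper reaches these via the single matrix identity
\[
  \pmt{
   h_{11}& \dots & h_{1n}& *\\
   \vdots & \ddots & \vdots & \vdots \\
   h_{n1}& \dots & h_{nn} & *\\
   0 & \dots & 0 & \nu\cdot \trans\nu
  }
  =
  \pmt{
   \nu_{x^1}\\ \vdots\\ \nu_{x^n}\\ \nu
  }
  (F_{x^1},\dots,F_{x^n},\trans\nu),
\]
whereas you trivialize the projective target via an affine chart; both routes give $h=\delta\lambda$ for a nowhere vanishing $\delta$, so up to this point the arguments coincide.

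The gap is in your third paragraph. You assert that the $A_k$-Morin criterion (and, dually, the $A_{k+1}$-inflection condition) is simply $\eta^j\lambda(p)=0$ for $j<k$ and $\eta^k\lambda(p)\neq 0$, and you interpret ``$k$-nondegenerate'' as asserting the vanishing of $\widetilde\xi^j h(p)$ for $j<k$. That is not what $k$-nondegeneracy means in Section~\ref{sec:prelim}: it is the \emph{transversality} condition $d\phi^{(j-1)}(T_pS_{j})\neq\{\zv\}$ for each $j\leq k$, equivalently (Theorem~\ref{thm:actual}) that the Jacobian of $\Lambda=(\phi,\phi',\dots,\phi^{(k-1)})$ has rank $k$ at $p$. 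This rank condition is a genuine additional hypothesis for $k\geq 2$ and is part of both the definition of $A_{k+1}$-inflection point and the $A_k$-Morin criterion from \cite{SUY3}; your stated criterion omits it entirely.

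Fortunately your proof is easily repaired, and in fact your first two paragraphs already contain it. Once you know $h$ and $\lambda$ differ by a nonvanishing factor and the vector fields $\xi$, $\eta$ agree along the common zero set, Proposition~\ref{prop:indep} guarantees that the entire tower of subsets $S_j$, the iterated derivatives $\phi^{(j)}$, and the rank of $\Lambda$ are identical whether computed from $(h,\xi)$ or from $(\lambda,\eta)$. Hence $k$-nondegeneracy and $(k{+}1)$-multiplicity transfer verbatim, and Theorem~\ref{thm:A} follows from Fact~\ref{eq:fact_suy}. You should replace your attempted unwinding with this direct appeal.
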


Though our definition of $A_{k+1}$-inflection points are given
in terms of the Hessian, this assertion allows us to define
$A_{k+1}$-inflection points by the singularities
of their affine Gauss map, which might be more familiar to 
readers than our definition.
However, the new notion ``$k$-multiplicity'' introduced in the present
paper is very useful for recognizing the duality between singular points
and inflection points.
Moreover, as mentioned above, 
our definition of $A_k$-inflection points works even when $F$ is a
front. 
We have the following dual assertion for the previous theorem.
Let $\G\colon{}M^n \to P\bigl((\K^{n+1})^*\bigr)$ be an immersion.
Then $p\in M^n$ is an inflection point of 
$\nu\colon{}M^n \to(\K^{n+1})^*$ such that $\pi\circ\nu=\G$.
This property does not depend on a choice of $\nu$.

\begingroup
\addtocounter{introtheorem}{-1}
\renewcommand{\theintrotheorem}{$\mathbf{\Alph{introtheorem}}'$}
\begin{introproposition}
\label{prop:A}
 Let $F:M^n\to \K^{n+1}$ be a front.
 Suppose that the affine Gauss map  $\G:M^n\to P((\K^{n+1})^*)$ 
 is a $\K$-immersion.
 Then $p\in M^{n}$ is an $A_{k+1}$-inflection point 
 of $\G$ $(1\leq k\leq n)$ if and only if $F$
 has an $A_{k+1}$-singularity at $p$. 
 {\rm (}See $(1.1)$ in \cite{SUY3} for  the definition of
 $A_{k+1}$-singularities.{\rm )}
\end{introproposition}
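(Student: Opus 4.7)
The plan is to derive the proposition from Theorem~\ref{thm:A} by exchanging the roles of $F$ and $\nu$, exploiting the manifest symmetry $h_{ij}=\nu\cdot F_{x^ix^j}=-\nu_{x^i}\cdot F_{x^j}$ of the Hessian form.

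First I would set up the dual picture. Since $\G=\pi\circ\nu$ is an immersion, $\nu,\nu_{x^1},\dots,\nu_{x^n}$ are linearly independent in $(\K^{n+1})^*$, so in particular $\nu\colon U\to(\K^{n+1})^*$ is itself an immersion into the affine space $(\K^{n+1})^*$. After translating $F$ by a constant so that $\nu(p)\cdot F(p)\neq 0$ and rescaling $\nu$ so that $\nu\cdot F\equiv 1$ near $p$ (the rescaling preserves immersivity by the previous sentence), differentiation of $\nu\cdot F=1$ combined with $\nu\cdot F_{x^i}=0$ gives $F\cdot\nu_{x^i}=0$. Hence $F$ serves as a local normal map of the immersion $\nu$, and $\pi\circ F\colon U\to P(\K^{n+1})$ is the affine Gauss map of $\nu$. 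A short calculation shows that the Hessian form of $\nu$ with respect to the normal $F$ coincides with $H$ (both are equal to $-F_{x^i}\cdot\nu_{x^j}$), so the $A_{k+1}$-inflection data are symmetric in the pair $(F,\nu)$.

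Next I would apply Theorem~\ref{thm:A} to the immersion $\nu$. This yields that $p$ is an $A_{k+1}$-inflection point of $\nu$ if and only if $\pi\circ F$ has an $A_k$-Morin singularity at $p$. Because the inflection property of a lift $\nu$ of $\G$ is independent of the choice of representative (as noted just before the proposition), the left-hand side is exactly the condition that $p$ is an $A_{k+1}$-inflection point of $\G$. To finish, one identifies \emph{$\pi\circ F$ has an $A_k$-Morin singularity at $p$} with \emph{$F$ has an $A_{k+1}$-singularity at $p$ as a front in the sense of \cite{SUY3}}. This is exactly the correspondence flagged in the parenthetical of Theorem~\ref{thm:A}: under the intrinsic formulation of singularities for wave fronts, an $A_k$-Morin singularity of the underlying map corresponds to an $A_{k+1}$-singularity of the front.

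The main obstacle is this last identification: one must combine the $A_{k+1}$-criteria of \cite{SUY3} with the classical Morin classification, and verify that neither the translation of $F$ nor the rescaling of $\nu$ changes the singularity type under study at $p$. Both are benign but require explicit justification, in particular because the immersion hypothesis used in the application of Theorem~\ref{thm:A} is the immersivity of $\nu$ into the affine space $(\K^{n+1})^*$, a priori a different condition from the assumed immersivity of $\G=\pi\circ\nu$ into $P((\K^{n+1})^*)$.
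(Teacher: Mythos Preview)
Your duality strategy—swap $F$ and $\nu$ and invoke Theorem~\ref{thm:A} for the immersion $\nu$—is a different route from the paper's, and through the penultimate step it is sound. The paper argues instead directly from the matrix identity \eqref{eq:split}: with $\G$ now the immersion, the left factor in \eqref{eq:split} is invertible, so $\det(h_{ij})$ and $\lambda=\det(F_{x^1},\dots,F_{x^n},\trans\nu)$ differ by a nonvanishing factor; this gives $I(\G)=S(F)$, matching nondegeneracy, and matching asymptotic and null directions, after which Fact~\ref{eq:fact_suy} (via Theorem~\ref{thm:actual}) finishes.

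There is, however, a genuine gap in your final identification. The parenthetical in Theorem~\ref{thm:A} does \emph{not} assert that an $A_k$-Morin singularity of $\pi\circ F$ is the same thing as an $A_{k+1}$-singularity of the front $F$. It is only a remark that, in the intrinsic indexing convention of the reference in the Added in Proof, the singularities called ``$A_k$-Morin'' here get relabelled ``$A_{k+1}$''—a statement about one map under two naming schemes, not about the relation between the two genuinely different maps $\pi\circ F\colon U\to P(\K^{n+1})$ and $F\colon U\to \K^{n+1}$. To justify your Step~3 you must instead show that $S(\pi\circ F)=S(F)$ near $p$ and that the two null directions coincide, and then apply both clauses of Fact~\ref{eq:fact_suy} (the $m=n$ clause to $\pi\circ F$, the $m=n+1$ clause to $F$). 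After your normalization $\nu\cdot F\equiv 1$ this is straightforward—since $\nu\cdot F\neq 0$ while $\nu\cdot F_{x^i}=0$, the vector $F(q)$ is never in the span of the $F_{x^i}(q)$, so $d(\pi\circ F)$ drops rank exactly where $dF$ does and with the same kernel—but once this is written out you have essentially reproduced the content of the paper's direct argument, and the detour through Theorem~\ref{thm:A} has not shortened anything.
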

\endgroup

In the case that $\K=\R$, $n=3$ and $F$ is an immersion, 
an $A_{3}$-inflection point is 
known as a {\em cusp of the Gauss map}
(cf. \cite{BGM}).

It can be easily seen that inflection points and the asymptotic
directions are invariant under projective transformations.
So we can define $A_{k+1}$-inflection points ($1\leq k\leq n$) of
an immersion $f:M^{n}\to P(\K^{n+2})$.
For each $p\in M^{n}$, we take a local
$\K$-differentiable coordinate system
$(U;x^1,\dots,x^{n})(\subset M^n)$.
Then there exists a $\K$-immersion $F:U\to \K^{n+2}$ such that $f=[F]$ is
the projection of $F$.
We set
\begin{equation}\label{eq:FG1}
 G:U\ni p \longmapsto 
  F_{x^1}(p)\wedge 
  F_{x^2}(p)\wedge 
  \cdots \wedge F_{x^n}(p)
  \wedge F(p)\in (\K^{n+2})^*.
\end{equation}
Here, we identify $(\K^{n+2})^*$ with $\bigwedge^{n+1}\K^{n+2}$ by
\[
\bigwedge\nolimits^{n+1} \K^{n+2}
     \ni v_1\wedge \cdots \wedge v_{n+1}
     \quad
     \longleftrightarrow 
     \quad
     \det(v_1, \dots, v_{n+1},*)\in (\K^{n+2})^*,
\]
where
\lq$\det$\rq\ is the determinant function on $\K^{n+2}$.
Then $G$ satisfies
\begin{equation}\label{eq:FG2}
     G \cdot F=0,\qquad G\cdot dF=dG\cdot F=0,
\end{equation}
where $\cdot$ is the canonical pairing between $\K^{n+2}$ and
$(\K^{n+2})^*$.
Since, $g:=\pi\circ G$ does not depend on the choice of 
a local coordinate system,
the projection of $G$ induces a globally defined $\K$-differentiable map
\[
    g=[G]:M^n\to P\bigl((\K^{n+2})^*\bigr),
\]
which is called the {\it dual\/} front of $f$.
We set
\[
   h:=\det(h_{ij}):U\longrightarrow \K
            \qquad \bigl(h_{ij}:=G\cdot F_{x^ix^j}
	            =-G_{x^i}\cdot F_{x^j}\bigr),
\]
which is called the {\it Hessian\/} of $F$.
The inflection points of $f$ correspond to the zeros of $h$.  

In Section~\ref{sec:dual}, we prove the following
\begin{introtheorem}
\label{thm:B}
 Let $f:M^{n}\to P(\K^{n+2})$ be an 
 immersed $\K$-differentiable hypersurface.
 Then $p\in M^{n}$ is an $A_{k+1}$-inflection point $(k\leq n)$
 if and only if the dual front $g$ has an $A_k$-singularity at $p$.
\end{introtheorem}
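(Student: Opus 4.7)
The plan is to reduce Theorem B to the affine Theorem A by passing to a local affine representative of $f$ and relating the dual front $g$ to the affine Gauss map of that representative.

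Since $A_{k+1}$-inflection points are projectively invariant, I would first fix coordinates on $\K^{n+2}$ so that $f(p)$ lies in the affine chart $\{y\neq 0\}$, with $y$ the last coordinate, and take the local representative $F(u)=(\tilde F(u),1)$ where $\tilde F\colon U\to\K^{n+1}$ is an immersion. Since $F_{x^i}=(\tilde F_{x^i},0)$, expanding the determinant defining $G$ in \eqref{eq:FG1} along the last row yields
\[
   G(u)=\bigl(-\tilde\nu_u,\;\tilde\nu_u\cdot\tilde F(u)\bigr)\in(\K^{n+2})^*,
\]
where $\tilde\nu_u\in(\K^{n+1})^*$ is the affine normal covector of $\tilde F$ defined via \eqref{eq:nu}. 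A parallel calculation gives $h_{ij}=G\cdot F_{x^ix^j}=-\tilde\nu_u\cdot\tilde F_{x^ix^j}$, so the Hessians of $f$ and of $\tilde F$ differ only by the sign $(-1)^n$. Consequently the inflection sets, asymptotic vector fields, and the $A_{k+1}$-inflection conditions of $f$ and of $\tilde F$ coincide near $p$.

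Next, I would identify the front singularity of $g$ with the Morin singularity of $\tilde\G=[\tilde\nu]$. Choosing an index $i$ with $(\tilde\nu_p)_i\neq 0$ and working in the affine chart of $P((\K^{n+2})^*)$ where the $i$-th component of a representative is nonzero, $g$ takes the form $(\tilde\G(u),\phi(u))$, where the first $n$ coordinates are the inhomogeneous coordinates of $\tilde\G$ in the corresponding chart of $P((\K^{n+1})^*)$ and $\phi(u)$ is a scaled affine support function. This extra scalar is precisely the datum recording the position of the tangent hyperplane—equivalently, the Legendrian-lift coordinate that promotes the map $\tilde\G$ to a front. By the criteria of \cite{SUY3}, $g$ has an $A_k$ front singularity at $p$ if and only if $\tilde\G$ has an $A_k$-Morin singularity at $p$; Theorem A applied to $\tilde F$ then closes the chain of equivalences needed for Theorem B.

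The main obstacle I expect is the identification of singularity types in the second step, since $g$ is naturally a front-germ while $\tilde\G$ is a map-germ between equidimensional spaces. Verifying the $A_k$-to-$A_k$ matching requires invoking (or re-deriving in the above affine chart) the classical dictionary that identifies $A_k$-Morin singularities of a map with the $A_k$ singularities of its natural Legendrian-lift front, showing in particular that the additional component $\phi$ does not alter the rank-drop structure of $g$ at $p$. Once that dictionary is in place, the affine reduction of the first step combined with Theorem A yields the result.
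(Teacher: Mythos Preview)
Your approach is sound in outline but takes a genuinely different route from the paper, and the gap you yourself flag in step~5 is where the real content lives.

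The paper does \emph{not} reduce to Theorem~A.  Instead it repeats the \emph{method} of Theorem~A's proof directly in the projective setting: it writes down the single $(n+2)\times(n+2)$ matrix identity
\[
\pmt{
h_{11}&\dots&h_{1n}&0&*\\
\vdots&\ddots&\vdots&\vdots&\vdots\\
h_{n1}&\dots&h_{nn}&0&*\\
0&\dots&0&0&G\cdot\trans G\\
*&\dots&*&\trans F\cdot F&0
}
=
\pmt{G_{x^1}\\\vdots\\G_{x^n}\\G\\\trans F}
(F_{x^1},\dots,F_{x^n},F,\trans G),
\]
observes that the right-hand factor is invertible precisely because $f=[F]$ is an immersion, and reads off from this one identity that $I(f)=S(g)$, that the asymptotic direction of $f$ equals the null direction of $g$, and that the Hessian $h$ and the singular-locus function for $g$ are proportional.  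Theorem~\ref{thm:actual} and Fact~\ref{eq:fact_suy} then finish the proof exactly as in Theorem~A.  No affine chart, no appeal to Theorem~A.

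Your reduction through an affine chart is a legitimate alternative, and steps~1--4 are correct.  But the ``classical dictionary'' you invoke in step~5 is not a black-box citation of \cite{SUY3}: to use Fact~\ref{eq:fact_suy} you must verify that the admissible function $\lambda_g$ for the front $g$ (built with normal coming from $F$) is proportional to $\mu=\det(\tilde\nu_{x^1},\dots,\tilde\nu_{x^n},\tilde\nu)$ and that the null vector fields agree.  That verification is a computation of the same nature and length as the paper's matrix identity above, so your route relocates the work rather than saving it.  Also watch the indexing in step~5: Fact~\ref{eq:fact_suy} assigns the labels $A_k$-Morin and $A_{k+1}$-front to the \emph{same} multiplicity condition on the null vector field, so the correspondence between the singularity type of $g$ and the Morin type of $\tilde\G$ carries a built-in shift that you should track explicitly rather than assert as ``$A_k\Leftrightarrow A_k$''.
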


Next, we consider the case of $\K=\R$.
In \cite{SUY1}, we defined the {\it tail part\/} of 
a swallowtail, that is, an $A_3$-singular point. 
An $A_3$-inflection point $p$ of $f:M^{2}\to P(\R^4)$
is called {\it positive\/} (resp.\ {\it negative}), 
if the Hessian takes negative (resp.\ positive) values
on the tail part of the dual of $f$ at $p$.
Let $p\in M^2$ be an  $A_3$-inflection point.
Then there exists a neighborhood $U$ such that $f(U)$ 
is contained in an
affine space $A^3$ in $P(\R^4)$.
Then the affine Gauss map $\G:U\to P(A^3)$ has an elliptic cusp 
(resp.\ a hyperbolic cusp)
if and only if it is positive (resp.\ negative)
(see \cite[p. 33]{BGM}).
In \cite{uribe}, Uribe-Vargas introduced a projective invariant $\rho$
and studied the projective geometry of swallowtails.
He proved that an $A_3$-inflection point is positive (resp.\ negative)
if and only if $\rho>1$ (resp.\ $\rho<1$).
The property that $h$ as in \eqref{eq:hessian1}
is negative is also independent of the choice of a
local coordinate system.
So we can define the set of negative points
\[
     M_{-}:=\{p\in M^{2}\,;\, h(p)<0\}.
\]
In Section~\ref{sec:dual},
we shall prove the following assertion as an application.
\begin{introtheorem}
\label{thm:C}
  Let $M^2$ be a compact orientable $C^\infty$-manifold without
 boundary, and
 $f:M^{2}\to P(\R^4)$ an immersion.
 We denote by $i^+_2(f)$ {\rm(}resp.\ $i^-_2(f)${\rm)} the number of
 positive $A_3$-inflection points 
 {\rm (}resp.\ negative $A_3$-inflection points{\rm)} on $M^2$
 {\rm(}see Section~\ref{sec:dual} for the precise definition 
     of $i^+_2(f)$ and $i^-_2(f)${\rm)}.
 Suppose that inflection points of $f$ 
 consist only of $A_2$ and $A_3$-inflection points.
 Then the following identity holds
 \begin{equation}\label{eq:BW}
      i_2^+(f)-i_2^-(f)=2\chi(M_-).
 \end{equation}
\end{introtheorem}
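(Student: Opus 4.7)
The strategy is a Poincar\'e--Hopf argument for the asymptotic line field of $f$ on $M_-$, following the philosophy of Bleecker--Wilson \cite{BW} but using Theorem~B and the $A_{k+1}$-criteria of \cite{SUY3} to handle godrons in the projective setting.

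On $M_-$ the Hessian form $H$ has signature $(1,1)$ and so yields two transverse null (asymptotic) directions at every point, together forming a smooth $2$-web $\mathcal A$.  As the boundary $\partial M_-\subset I(f)$ is approached, the two directions coalesce: tangentially to $\partial M_-$ at $A_2$-inflection points, and transversally to $\partial M_-$ at the (finitely many) $A_3$-inflection points.  Let $\pi\colon\widehat M_-\to \overline{M_-}$ be the branched double cover obtained by separating the two directions of $\mathcal A$.  A direct local check at $A_2$-points shows that $\widehat M_-$ is there a smooth branched cover of the standard type $y^2=x$, so topologically $\widehat M_-$ is the double of $\overline{M_-}$ along $\partial M_-$; consequently
\[
 \chi(\widehat M_-)=2\chi(M_-).
\]
The web $\mathcal A$ lifts to a line field $\mathcal L$ on $\pi^{-1}(M_-)$ which, by the same $A_2$-local analysis, extends smoothly across every arc of the suture lying over $A_2$-points.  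Thus $\mathcal L$ is a smooth line field on $\widehat M_-$ off the finite set of preimages of $A_3$-inflection points.

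The decisive step is the index of $\mathcal L$ at each such preimage $\tilde p$.  By Theorem~B the dual front $g$ has an $A_3$-singularity (a swallowtail) at $p$, and the normal form for an $A_3$-singularity given in \cite{SUY3}, transported through the duality, provides explicit local coordinates on a neighbourhood of $p$ in which $H$ and the two branches of $\mathcal A$ can be written down.  A direct computation in these coordinates will show that the sign of $H$ on the tail part of the swallowtail of $g$---which by definition distinguishes positive from negative $A_3$-inflection points---controls the index via
\[
 \operatorname{ind}_{\tilde p}\mathcal L=\begin{cases}+1 & \text{if $p$ is a positive $A_3$-inflection point,}\\ -1 & \text{if $p$ is a negative $A_3$-inflection point.}\end{cases}
\]
Applying the Poincar\'e--Hopf theorem to $\mathcal L$ on the closed topological surface $\widehat M_-$ then yields
\[
 2\chi(M_-)=\chi(\widehat M_-)=\sum_{\tilde p}\operatorname{ind}_{\tilde p}\mathcal L=i_2^+(f)-i_2^-(f),
\]
which is~\eqref{eq:BW}.

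The main obstacle is, as in \cite{BW}, the local index computation at godrons, together with the careful check that $\widehat M_-$ is a topological surface also at the lifts of $A_3$-inflection points (so that the doubling formula $\chi(\widehat M_-)=2\chi(M_-)$ continues to hold there).  Both tasks hinge on translating the sign condition in the definition of positive/negative $A_3$-inflection points, which is phrased in terms of the dual swallowtail, into a concrete condition on the null directions of $H$; this translation is exactly what Theorem~B together with the $A_3$-normal form of \cite{SUY3} is designed to supply.
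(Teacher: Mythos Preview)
Your route is genuinely different from the paper's.  The paper does not run Poincar\'e--Hopf on $M_-$ at all: instead it passes directly to the dual front $g\colon M^2\to P((\R^4)^*)$, observes (via local lifts $\hat f,\hat g\colon U\to S^3$ with $\hat f\cdot\hat g=0$) that $df(TM^2)$ is the limiting tangent bundle of $g$, and then applies the ready-made Euler-number formula (2.5) of \cite{SUY2} to $g$.  Since by hypothesis $S(g)$ consists only of cuspidal edges and swallowtails, that formula yields $\chi(M^2)=\chi(M_+)-\chi(M_-)+i_2^+(f)-i_2^-(f)$, and comparing with $\chi(M^2)=\chi(M_+)+\chi(M_-)$ gives \eqref{eq:BW} in two lines.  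In other words, the index accounting you propose to redo by hand has already been packaged once and for all in \cite{SUY2}, and Theorem~B is used only to identify $A_3$-inflection points of $f$ with swallowtails of $g$.  Your approach is closer in spirit to Bleecker--Wilson's original argument and is in principle workable, but it is considerably longer and leaves the index computation and the manifold structure of $\widehat M_-$ near godrons as open tasks.

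There is also a slip in your local description: by the very definition of $A_{k+1}$-inflection points via $k$-multiplicity (Section~\ref{sec:prelim}), the unique asymptotic direction at a parabolic point is \emph{transversal} to $I(f)=\partial M_-$ at an $A_2$-inflection point and \emph{tangent} to $I(f)$ at an $A_3$-inflection point, exactly the opposite of what you state.  This reversal is not fatal to the strategy, but it does mean your local analysis (both the ``$y^2=x$'' picture at $A_2$-points and the index computation at $A_3$-points) must be reworked; in particular, the $A_3$-points, being the tangency points, are precisely where the extension of $\mathcal L$ across the suture becomes delicate.
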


The above formula is a generalization of that of 
Bleecker and Wilson \cite{BW} when $f(M^2)$ is contained
in an affine $3$-space

\begin{introcorollary}[Uribe-Vargas {\cite[Corollary 4]{uribe}}]
\label{cor:1}
 Under the assumption of Theorem~\ref{thm:C},
 the total number $i_2^+(f)+i_2^-(f)$ of $A_3$-inflection points is
 even.
\end{introcorollary}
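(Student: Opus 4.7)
The plan is to derive the corollary as an immediate parity consequence of Theorem~\ref{thm:C}, rather than to give an independent argument. Since Theorem~\ref{thm:C} is stated as a hypothesis available to us, essentially no geometric work remains; the only task is an arithmetic manipulation.

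First, I would simply add and subtract: writing
\[
   i_2^+(f)+i_2^-(f) = \bigl(i_2^+(f)-i_2^-(f)\bigr) + 2\,i_2^-(f),
\]
and then substituting the identity \eqref{eq:BW} from Theorem~\ref{thm:C} gives
\[
   i_2^+(f)+i_2^-(f) = 2\chi(M_-) + 2\,i_2^-(f) = 2\bigl(\chi(M_-)+i_2^-(f)\bigr).
\]
Since $\chi(M_-)$ and $i_2^-(f)$ are integers, the right-hand side is an even integer, which is exactly the claim.

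There is no real obstacle here: the corollary is nothing more than the observation that if $a-b$ is even then $a+b$ is also even, applied to $a=i_2^+(f)$ and $b=i_2^-(f)$. The only thing worth checking is that the hypothesis of Theorem~\ref{thm:C} — that inflection points of $f$ consist only of $A_2$- and $A_3$-types, and that $M^2$ is compact and orientable without boundary — is precisely what is assumed in the corollary, so that the invocation of \eqref{eq:BW} is legitimate. All the difficulty of the statement is absorbed in Theorem~\ref{thm:C} itself; the corollary is essentially a one-line consequence of its even right-hand side.
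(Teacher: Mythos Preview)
Your proof is correct and matches the paper's approach: the corollary is stated as an immediate consequence of Theorem~\ref{thm:C}, since $i_2^+(f)-i_2^-(f)=2\chi(M_-)$ forces $i_2^+(f)+i_2^-(f)$ to have the same parity. The paper does not spell out the arithmetic, but notes for contrast that Uribe-Vargas's original argument proceeded differently, via the parity of loops of flecnodal curves joining pairs of $A_3$-inflection points.
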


In \cite{uribe}, this corollary is proved by counting the parity of 
a loop consisting of flecnodal curves which bound two $A_3$-inflection
points.
\begin{introcorollary}
\label{cor:2}
 The same formula \eqref{eq:BW} holds for
 an immersed surface in 
 the unit $3$-sphere $S^3$ or in the hyperbolic $3$-space $H^3$.
\end{introcorollary}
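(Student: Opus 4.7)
The plan is to reduce the corollary to Theorem C by projecting $S^3$ and $H^3$ into $P(\R^4)$. In both cases one has a natural map which is a local diffeomorphism onto its image: the standard inclusion $S^3\subset\R^4\setminus\{\zv\}$ composed with $\pi\colon\R^4\setminus\{\zv\}\to P(\R^4)$ is a $2{:}1$ covering, while the Klein projective model realizes $H^3$ as the open set of $P(\R^4)$ bounded by the Cayley absolute via the projection of one sheet of a hyperboloid in $\R^4$. In either case, composing with $\pi$ sends an immersion $f\colon M^2\to S^3$ (resp.\ $H^3$) to an immersion $\tilde f:=\pi\circ f\colon M^2\to P(\R^4)$.

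The next step is to transfer all the data appearing in Theorem C from $f$ to $\tilde f$. Using the tautological lift $F=f\colon M^2\to\R^4$ (viewing $S^3$, resp.\ $H^3$, as a hypersurface in $\R^4$) as the local lift entering the definitions preceding Theorem B, one checks that the inflection points, asymptotic directions, $A_{k+1}$-inflection types, positive/negative labels of $A_3$-inflection points, and the open set $M_-$ of $\tilde f$ coincide pointwise with their intrinsic counterparts for $f$. This is immediate from the fact, recorded in the paper, that each of these notions is invariant under projective transformations and hence depends only on the image curve of tangent $2$-planes in $P(\R^4)$, which is determined by $\tilde f$.

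Once the correspondence of invariants is in place, I would simply apply Theorem C to $\tilde f$ and read off
\[
   i_2^+(f)-i_2^-(f)=i_2^+(\tilde f)-i_2^-(\tilde f)=2\chi(M_-),
\]
which is the desired identity.

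The one subtle point is verifying that the Hessian sign computed via $h_{ij}=G\cdot F_{x^ix^j}$ for $F$ the ambient inclusion into $\R^4$ agrees, up to a globally positive factor, with the intrinsic sign obtained from the second fundamental form of $f$ in $S^3$ or $H^3$; equivalently, that $M_-$ does not depend on whether one computes the Hessian relative to the curved ambient metric or to the flat projective model. This follows because the Klein/Gauss projection sends ambient geodesics to projective lines and thus preserves the sign of the second fundamental form, so the two definitions of $M_-$ yield the same subset of $M^2$. With this verified, everything reduces to Theorem C and no further argument is required.
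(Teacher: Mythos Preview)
Your proposal is correct and takes essentially the same approach as the paper: reduce to Theorem~C by composing $f$ with the canonical projection $\pi\colon S^3\to P(\R^4)$ in the spherical case, and in the hyperbolic case first pass through the canonical projective embedding $\iota\colon H^3\to S^3_+$ (your Klein model) before applying $\pi$. The paper's proof is in fact a single sentence for each case and omits the verifications you spell out about the invariance of $M_-$, $i_2^\pm$, and the $A_{k+1}$-type under the passage to $\tilde f$; your added justification is reasonable but not strictly needed beyond the projective invariance already recorded in the introduction.
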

\begin{proof}
 Let $\pi:S^3\to P(\R^4)$ be the canonical projection.
 If  $f:M^2\to S^3$ is an immersion,
 we get the assertion
applying  Theorem~\ref{thm:C} to $\pi\circ f$.
 On the other hand, if 
 $f$ is an immersion into $H^3$,
 we consider the canonical projective embedding
 $\iota:H^3\to S^3_+$
 where  $S^3_+$ is the open hemisphere of $S^3$.
 Then we get the assertion applying  
Theorem~\ref{thm:C} to $\pi\circ \iota\circ f$.
\end{proof}

Finally,
in Section~\ref{sec:cusp}, we shall introduce a new invariant for
$3/2$-cusps using the duality, 
which is a measure for acuteness using the classical cycloid.  

\bigskip
This work is inspired by the result of
Izumiya, Pei and Sano \cite{ips} that characterizes
$A_2$ and $A_3$-singular points on surfaces in $H^3$ via the singularity
of certain height functions,
and the result on the duality between space-like surfaces
in hyperbolic $3$-space (resp.\ in light-cone),
and those in de Sitter space (resp.\ in light-cone) given
by Izumiya \cite{iz}.
The authors would like to thank Shyuichi Izumiya
for his impressive informal talk at Karatsu, 2005.

\section{Preliminaries and a proof of Theorem~\ref{thm:A}}
\label{sec:prelim}

In this section, we shall introduce a new notion
``multiplicity'' for a contact of a given vector field along an immersed
hypersurface. 
Then our previous criterion 
for $A_k$-singularities (given in \cite{SUY3}) can be generalized to
the criteria for $k$-multiple contactness of a given vector field
(see Theorem~\ref{thm:actual}).

Let $M^n$ be a $\K$-differentiable manifold and $S(\subset M^n)$ 
an embedded $\K$-differentiable hypersurface in $M^n$.
We fix $p\in S$ and take a $\K$-differentiable vector field 
\[
    \eta:S\supset V\ni q \longmapsto \eta_q\in T_qM^n
\]
along  $S$ defined on a neighborhood $V\subset S$ of $p$.
Then we can construct a $\K$-differential vector field $\tilde \eta$
defined on a neighborhood $U\subset M^n$ of 
$p$
such that the restriction $\tilde \eta|_S$ coincides with $\eta$.
Such an $\tilde \eta$ is called {\it an extension of $\eta$}.
(The local existence of $\tilde \eta$ is mentioned in  
 \cite[Remark 2.2]{SUY3}.)

\begin{definition}\label{def:admissible}
 Let $p$ be an arbitrary point on $S$,
and $U$ a neighborhood of $p$ in $M^n$.
A  $\K$-differentiable function $\phi:U\to \K$ 
is called {\it admissible} near $p$
if it satisfies the following properties
 \begin{enumerate}
  \item $O:=U\cap S$ is the zero level set of $\phi$, and
  \item $d\phi$ never vanishes on $O$.
 \end{enumerate}
\end{definition} 
One can easily find an admissible function near $p$.
We set
$\phi':=d\phi(\tilde\eta):U\to \K$
and define a subset $S_2(\subset O\subset S)$ by
\[
   S_2:=\{q\in O \,;\,\phi'(q)=0\}=\{q\in O\,;\, \eta_q\in T_qS\}.
\]
If $p\in S_2$, then $\eta$ is said to 
{\it meet $S$ with multiplicity $2$ at $p$\/}
or equivalently, $\eta$ is said to 
{\it contact $S$ with multiplicity $2$ at $p$}.
Otherwise, $\eta$ is said to {\it meet $S$ with multiplicity $1$ at $p$}.
Moreover, if $d\phi'(T_pO)\ne \{\zv\}$,
$\eta$ is said to {\it be $2$-nondegenerate at $p$}.
The $k$-th multiple contactness and $k$-nondegeneracy are defined
inductively.
In fact, if the $j$-th multiple contactness and 
the submanifolds  $S_j$
have been already defined for $j=1,\dots,k$ ($S_1=S$),
we set
\[
    \phi^{(k)}:=d\phi^{(k-1)}(\tilde\eta):
           U\longrightarrow \K
           \qquad (\phi^{(1)}:=\phi')
\]
and can define a subset of $S_{k}$ by
\[
    S_{k+1}:=\{q\in S_k\,;\,\phi^{(k)}(q)=0\}
          =\{q\in S_k\,;\, \eta_q\in T_qS_k\}.
\]
We say that
 $\eta$  {\it meets $S$ with multiplicity $k+1$ at $p$\/}
if $\eta$ is $k$-nondegenerate at $p$ and
$p\in S_{k+1}$. 
Moreover, if $d\phi^{(k)}(T_pS_{k})\ne \{\zv\}$,
$\eta$ is called {\it $(k+1)$-nondegenerate\/} at $p$.
If $\eta$ is $(k+1)$-nondegenerate\ at $p$, then
$S_{k+1}$ is a hypersurface of $S_k$ near $p$.

\begin{remark}\label{rmk:add}
Here we did not define \lq $1$-nondegeneracy\rq\
of $\eta$. However, from now on, 
{\it any $\K$-differentiable vector field
$\eta$ of $M^n$ along $S$ is always
$1$-nondegenerate by convention}.
In the previous paper \cite{SUY3}, 
\lq $1$-nondegeneracy\rq\ (i.e. nondegeneracy) is
defined not for a vector field along the singular set
but for a given singular point. 
If a singular point $p\in U$ of a front $f:U\to \K^{n+1}$
is nondegenerate in the sense of
\cite{SUY3}, then the function $\lambda:U\to \K$ defined in
\cite[(2.1)]{SUY3} is an admissible function, and the
null vector field $\eta$ along $S(f)$ is given.
When $k\ge 2$, by definition, $k$-nondegeneracy of the singular point $p$
is equivalent to the $k$-nondegeneracy of 
the null vector field $\eta$ at $p$ (cf. \cite{SUY3}).
\end{remark}

\begin{proposition}\label{prop:indep}
 The $k$-th multiple contactness and $k$-nondegeneracy are both
 independent of the choice of an extension $\tilde \eta$ of $\eta$
 and also of the choice of admissible 
functions as in Definition \ref{def:admissible}. 
\end{proposition}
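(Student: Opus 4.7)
The plan is to prove both invariances by induction on $k$, treating changes of extension and of admissible function separately. The inductive statement at level $k$ has two parts: (I) the set $S_k$ is independent of the choices of extension $\tilde\eta$ and admissible function $\phi$; and (II) at a point $p\in S_k$ where $\eta$ is already $k$-nondegenerate (so that $S_k$ is a submanifold near $p$), the condition $d\phi^{(k)}(T_pS_k)\ne\{\zv\}$ defining $(k+1)$-nondegeneracy is independent of those choices. The principal case to treat is $p\in S_{k+1}$, which is where the definitions of both multiplicity and nondegeneracy are applied inductively.

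For a change of extension, write $\tilde\eta' - \tilde\eta = \phi\, W$ for some smooth vector field $W$ (Hadamard's lemma, applied in a chart in which $\phi$ is a coordinate). I would show by induction that $A_k := \phi^{(k)}_{\tilde\eta'} - \phi^{(k)}_{\tilde\eta}$ vanishes identically on $S_k$. For $k=1$, $A_1 = \phi\cdot d\phi(W)$ vanishes on $S$. For the inductive step, expand
\[
   A_{k+1} \;=\; dA_k(\tilde\eta') \;+\; d\phi^{(k)}_{\tilde\eta}(\tilde\eta'-\tilde\eta)
        \;=\; dA_k(\tilde\eta') \;+\; \phi\, d\phi^{(k)}_{\tilde\eta}(W).
\]
At $q\in S_{k+1}$, the rightmost summand vanishes because $\phi(q)=0$, while the left summand vanishes because $A_k|_{S_k}\equiv 0$ forces $dA_k$ to annihilate $T_qS_k$, and $\tilde\eta'(q)=\eta_q\in T_qS_k$ is precisely the defining condition of $S_{k+1}$. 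This establishes (I) in this case, and part (II) then follows since a smooth function vanishing on a submanifold has zero restricted differential on its tangent space.

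For a change of admissible function, any two admissible functions satisfy $\psi=u\phi$ with $u$ nowhere zero (Hadamard gives $\psi=u\phi$, and the nonvanishing of $d\psi$ on $S$ forces $u\ne 0$ on $S$). By a direct induction using only the Leibniz and chain rules, one obtains an expansion
\[
   \psi^{(k)} \;=\; u\,\phi^{(k)} \;+\; \sum_{j=0}^{k-1} c_j\,\phi^{(j)}
\]
for certain smooth coefficients $c_j$ depending on $u$ and $\tilde\eta$. Since $u\ne 0$ and each $\phi^{(j)}$ with $j<k$ vanishes on $S_{j+1}\supset S_k$, the zero sets of $\psi^{(k)}$ and $\phi^{(k)}$ within $S_k$ coincide, yielding (I). At a point $p\in S_{k+1}$ we have $\phi^{(j)}(p)=0$ for $j\le k$, and $d\phi^{(j)}|_{T_pS_k}=0$ for $j<k$ (since $\phi^{(j)}\equiv 0$ on $S_k$); differentiating the displayed identity and restricting to $T_pS_k$ therefore yields $d\psi^{(k)}|_{T_pS_k}=u(p)\,d\phi^{(k)}|_{T_pS_k}$, so $(k+1)$-nondegeneracy is preserved since $u(p)\ne 0$, establishing (II).

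The main technical obstacle will be managing the bookkeeping in the extension-change induction, in particular verifying that the perturbation $\phi\,W$ contributes to $\phi^{(k+1)}_{\tilde\eta'}$ only through quantities vanishing on $S_{k+1}$. The crux is the geometric fact that $\eta_p\in T_pS_k$ for $p\in S_{k+1}$, which collapses $dA_k$ applied to $\tilde\eta'$ at such points; once this is granted, everything else is a routine application of product and chain rules, combined with the elementary Hadamard division lemma.
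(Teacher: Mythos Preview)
Your argument is correct and follows the same inductive strategy as the paper. The paper's proof is much terser: it fixes adapted coordinates with $\phi=x^n$, observes that $\phi'=c^n$ (the $n$-th component of $\tilde\eta$) and that on $S$ one has $\psi'=\psi_{x^n}\cdot\phi'$ with $\psi_{x^n}\neq 0$, and then simply asserts that ``the assertion follows inductively.'' Your Hadamard-based formulation ($\psi=u\phi$, $\tilde\eta'-\tilde\eta=\phi W$) is equivalent to this coordinate choice but yields \emph{global} identities such as $\psi^{(k)}=u\phi^{(k)}+\sum_{j<k}c_j\phi^{(j)}$, which makes the inductive differentiation transparent; in particular, you treat independence of the extension $\tilde\eta$ explicitly via the difference $A_k$, whereas the paper leaves this entirely implicit in the phrase ``follows inductively.'' So the route is the same, but your write-up supplies the bookkeeping the paper omits.
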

\begin{proof}
 We can take a local coordinate system $(U;x^1,\dots,x^n)$
 of $M^n$ such that $x^n=\phi$.
Write
 \[
\tilde \eta:=\sum_{j=1}^n c^j \frac{\partial}{\partial x^j},
 \]
 where $c^j$ $(j=1,\ldots,n)$ are $\K$-differentiable functions.
 Then we have that 
 $\phi'=\sum_{j=1}^n c^j \phi_{x^j}=c^n$.

 Let $\psi$ be another admissible 
function defined on $U$.
 Then 
 \[
    \psi'=
      \sum_{j=1}^n c^j \frac{\partial\psi}{\partial x^j}
       =c^n \frac{\partial\psi}{\partial x^n}
       =\phi'\frac{\partial\psi}{\partial x^n}.
 \]
 Thus $\psi'$ is proportional to $\phi'$. Then the assertion follows
 inductively.
\end{proof}

Corollary 2.5 in  \cite{SUY3} is now generalized
into the following assertion:

\begin{theorem}\label{thm:actual}
 Let $\tilde \eta$ be an extension of the vector field $\eta$. 
 Let us assume $1\leq k\leq n$.
 Then the vector field $\eta$ is $k$-nondegenerate at $p$,
 but $\eta$ does not meet $S$ with multiplicity $k+1$ at $p$
 if and only if
 \[
     \phi(p)=\phi'(p)=\dots =\phi^{(k-1)}(p)=0,\quad
     \phi^{(k)}(p)\ne 0,
 \]
 and the Jacobi matrix of $\K$-differentiable map
 \[
     \Lambda:=(\phi,\phi',\dots, \phi^{(k-1)}):U\longrightarrow \K^k
 \]
 is of rank $k$ at $p$, where 
$\phi$ is an 
admissible $\K$-differentiable function
and
 \[
    \phi^{(0)}:=\phi,\quad 
    \phi^{(1)}(=\phi'):=d\phi(\tilde \eta), \quad\dots,\quad 
    \phi^{(k)}:=d\phi^{(k-1)}(\tilde \eta).
  \]
\end{theorem}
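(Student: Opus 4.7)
The plan is to proceed by induction on $k\geq 1$, establishing the slightly cleaner auxiliary claim that $\eta$ is $k$-nondegenerate at $p$ if and only if $\phi(p)=\phi'(p)=\dots=\phi^{(k-1)}(p)=0$ and the Jacobian of $\Lambda=(\phi,\phi',\dots,\phi^{(k-1)})$ has rank $k$ at $p$. The theorem then follows immediately, since, once $\eta$ is known to be $k$-nondegenerate, the failure of $\eta$ to meet $S$ with multiplicity $k+1$ is by definition $p\notin S_{k+1}$, i.e.\ $\phi^{(k)}(p)\neq 0$. The base case $k=1$ is immediate: Remark~\ref{rmk:add} makes $\eta$ automatically $1$-nondegenerate, the vanishing $\phi(p)=0$ is the statement $p\in S$, and the rank-one condition on $\Lambda=(\phi)$ is nothing but the admissibility of $\phi$.

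For the inductive step, I would assume the auxiliary claim at level $k-1$. Under that hypothesis, the implicit function theorem makes $S_{k-1}=\{\phi=\phi'=\dots=\phi^{(k-2)}=0\}$ a codimension-$(k-1)$ submanifold of $M^n$ near $p$, with tangent space
\[
T_pS_{k-1}=\bigcap_{j=0}^{k-2}\ker d\phi^{(j)}_p.
\]
By its inductive definition, $\eta$ is $k$-nondegenerate at $p$ exactly when (i) $\eta$ is $(k-1)$-nondegenerate at $p$, (ii) $p\in S_k$ (equivalently $\phi^{(k-1)}(p)=0$), and (iii) $d\phi^{(k-1)}_p$ does not vanish on $T_pS_{k-1}$. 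Using the description of $T_pS_{k-1}$ above, condition (iii) is equivalent to $d\phi^{(k-1)}_p$ being linearly independent from $d\phi_p,\dots,d\phi^{(k-2)}_p$, which combined with the rank-$(k-1)$ hypothesis of (i) is the same as saying that the Jacobian of $\Lambda=(\phi,\phi',\dots,\phi^{(k-1)})$ has rank $k$ at $p$. This closes the induction, and Proposition~\ref{prop:indep} ensures that the resulting statement is intrinsic, independent of the choice of extension $\tilde\eta$ and of admissible function $\phi$.

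The main obstacle will be the careful propagation of the submanifold structure through the induction: one must keep track of the fact that the nested sets $S_j$ are honestly regular submanifolds whose tangent spaces are the expected common kernels, which is precisely what the Jacobian rank hypothesis encodes at each stage. Once that bookkeeping is in place, the linear-algebraic translation from the geometric condition $d\phi^{(k-1)}|_{T_pS_{k-1}}\neq 0$ to a rank condition on $\Lambda$ is essentially automatic, and the equivalence in the theorem drops out.
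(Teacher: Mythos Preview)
Your inductive argument is correct and is the natural proof of this statement. The paper itself does not give a proof of Theorem~\ref{thm:actual}; it only remarks that ``the proof of this theorem is completely parallel to that of Corollary~2.5 in \cite{SUY3}'', so you have supplied precisely the argument the authors are pointing to: propagate the submanifold structure of $S_{k-1}$ via the rank-$(k-1)$ hypothesis, identify $T_pS_{k-1}=\bigcap_{j=0}^{k-2}\ker d\phi^{(j)}_p$, and translate the condition $d\phi^{(k-1)}_p|_{T_pS_{k-1}}\neq 0$ into linear independence of $d\phi^{(k-1)}_p$ from the preceding differentials.
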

The proof of this theorem is completely parallel to that of
Corollary 2.5 in  \cite{SUY3}.

\medskip
To prove Theorem~\ref{thm:A} by applying Theorem~\ref{thm:actual},
we shall review the criterion for $A_k$-singularities in \cite{SUY3}.
Let $U^n$ be a domain in $\K^n$, and consider a map
$\Phi:U^n\to \K^{m}$ where $m\ge n$.
A point $p\in U^{n}$ is called a {\it singular point\/} if
the rank of the differential map $d\Phi$ is less than $n$.
Suppose that the singular set $S(\Phi)$ of $\Phi$ consists of 
a $\K$-differentiable hypersurface $U^n$.
Then a vector field
$\eta$ along $S$ is called a {\it null vector field\/} if
$d\Phi(\eta)$ vanishes identically. 
In this paper, we consider the case $m=n$ or $m=n+1$.
If $m=n$, we define a $\K$-differentiable
function $\lambda:U^n\to \K$
by
\begin{equation}\label{eq:lambda-map}
     \lambda:=\det(\Phi_{x^1},\dots,\Phi_{x^n}).
\end{equation}
On the other hand, 
if $\Phi:U^n\to\K^{n+1}$ ($m=n+1$) and
$\nu$ is a non-vanishing $\K$-normal vector field
(for a definition, see \cite[Section 1]{SUY3})
we set
\begin{equation}\label{eq:lambda-fr}
     \lambda:=\det(\Phi_{x^1},\dots,\Phi_{x^n},\nu).
\end{equation}
Then the singular set $S(\Phi)$ of the map $\Phi$
coincides with the zeros of $\lambda$.
Recall that  $p\in S(\Phi)$ is called {\it nondegenerate} 
if $d\lambda(p)\ne \zv$ (see \cite{SUY3} and Remark \ref{rmk:add}). 
Both of two cases \eqref{eq:lambda-map} and \eqref{eq:lambda-fr},
the functions $\lambda$ are admissible near $p$, if $p$
is non-degenerate (cf.\ Definition~\ref{def:admissible}).
When 
$S(\Phi)$ consists of nondegenerate singular points,
then it is a hypersurface and there exists
a non-vanishing null vector field $\eta$ on $S(\Phi)$.
Such a vector field $\eta$ determined up to a multiplication of
non-vanishing $\K$-differentiable functions.
The following  assertion holds as seen in \cite{SUY3}.

\begin{fact}\label{eq:fact_suy}
 Suppose $m=n$ and $\Phi$ is a $C^\infty$-map
 {\rm(}resp.\ $m=n+1$ and $\Phi$ is a front{\rm)}.
 Then $\Phi$ has an $A_{k}$-Morin singularity 
 {\rm(}resp.\ $A_{k+1}$-singularity{\rm)} at $p\in M^n$ if and only if 
 $\eta$ is $k$-nondegenerate at $p$ but does not meet $S(\Phi)$ with
 multiplicity $k+1$ at $p$. 
 {\rm(}Here multiplicity $1$ means that 
 $\eta$ meets $S(\Phi)$ at $p$ transversally, and
$1$-nondegeneracy is an empty condition.{\rm)}
\end{fact}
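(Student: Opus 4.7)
The plan is to reduce the claim to Theorem~\ref{thm:actual} by making a careful choice of admissible function, and then invoke the $A_{k+1}$/$A_k$-Morin criterion from \cite{SUY3} (which is what Theorem~\ref{thm:actual} is asserted to generalize).

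First I would verify that the function $\lambda$ defined in \eqref{eq:lambda-map} (when $m=n$) or \eqref{eq:lambda-fr} (when $m=n+1$ and $\Phi$ is a front) is an admissible function near $p$ in the sense of Definition~\ref{def:admissible}. Since $p$ is a nondegenerate singular point, $\lambda(p)=0$ and $d\lambda(p)\ne 0$; moreover the singular set $S(\Phi)$ coincides with the zero set $\lambda^{-1}(0)$ near $p$. Thus $\lambda$ plays the role of $\phi$ in Definition~\ref{def:admissible} with $S=S(\Phi)$. By Proposition~\ref{prop:indep}, any other admissible $\phi$ and any extension $\tilde\eta$ of the null vector field $\eta$ would yield the same notion of $k$-nondegeneracy and $k$-multiplicity, so we may work with this specific $\lambda$.

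Next I would pick an extension $\tilde\eta$ of the null vector field $\eta$ to a neighborhood of $p$ in $U^n$ and form the iterated functions $\lambda^{(j)}:=d\lambda^{(j-1)}(\tilde\eta)$, with $\lambda^{(0)}=\lambda$. Applying Theorem~\ref{thm:actual} with $\phi=\lambda$, the statement ``$\eta$ is $k$-nondegenerate at $p$ but does not meet $S(\Phi)$ with multiplicity $k+1$'' is equivalent to the following list of conditions:
\begin{equation*}
 \lambda(p)=\lambda'(p)=\cdots=\lambda^{(k-1)}(p)=0,\quad \lambda^{(k)}(p)\ne 0,
\end{equation*}
together with the requirement that the Jacobi matrix of $(\lambda,\lambda',\dots,\lambda^{(k-1)}):U\to\K^k$ have rank $k$ at $p$.

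Finally I would invoke the criterion for $A_k$-Morin singularities (resp.\ $A_{k+1}$-singularities of fronts) proved in \cite{SUY3}, which states precisely that the above conjunction characterizes such singularities. Since Theorem~\ref{thm:actual} here is declared to have a proof ``completely parallel'' to Corollary~2.5 of \cite{SUY3}, this translation is a direct matching of two equivalent formulations. The main (and only) obstacle is the bookkeeping of translating the older ``iterated-derivative'' language of \cite{SUY3} into the present multiplicity-theoretic language; all geometric content is packaged inside Theorem~\ref{thm:actual} and the previous criterion, and Proposition~\ref{prop:indep} ensures the reformulation is canonical.
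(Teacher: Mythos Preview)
Your proposal is correct, and it is essentially the justification the paper has in mind. Note that the paper does not actually give a proof of this Fact: it is introduced with ``The following assertion holds as seen in \cite{SUY3}'', i.e., it is simply cited from the earlier paper. What you have written is precisely the unpacking of that citation---take $\lambda$ as the admissible function, use Theorem~\ref{thm:actual} to translate $k$-nondegeneracy and non-$(k{+}1)$-multiplicity into the iterated-derivative-and-rank conditions on $(\lambda,\lambda',\dots,\lambda^{(k-1)})$, and then recognize those conditions as exactly the $A_k$-Morin / $A_{k+1}$-front criterion of \cite{SUY3}. Remark~\ref{rmk:add} already records that the two notions of $k$-nondegeneracy (the one in \cite{SUY3} for singular points and the present one for the null vector field) coincide, so your route through Theorem~\ref{thm:actual} and the paper's implicit route through Remark~\ref{rmk:add} amount to the same thing.
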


As an application of the fact for $m=n$, we now give a proof of 
Theorem~\ref{thm:A}:
Let $F:M^{n}\to \K^{n+1}$ 
be an immersed $\K$-differentiable hypersurface.
Recall that a point  $p\in M^{n}$ is called 
a {\it nondegenerate inflection point\/} if
the derivative $dh$ of the local Hessian function $h$
(cf.~\eqref{eq:hessian1}) with respect to $F$ does not vanish at $p$.
Then the set $I(F)$ of inflection points consists of a hypersurface,
called the {\it inflectional hypersurface}, and
the function $h$ is an admissible function on a
neighborhood of $p$ in $M^n$.
A nondegenerate inflection point $p$ is called 
an {\it $A_{k+1}$-inflection point} of $F$
if the asymptotic vector field $\xi$ is $k$-nondegenerate at $p$ but
does not meet $I(F)$ with multiplicity $k+1$ at $p$.
\begin{proof}[Proof of Theorem~\ref{thm:A}]
 Let $\nu$ be a map given by \eqref{eq:nu}, 
 and $\G:M^n\to  P((\K^{n+1})^*)$ the affine 
Gauss map induced from 
 $\nu$ by \eqref{eq:G}.
 We set
 \[
    \mu:=\det(\nu_{x^1},\nu_{x^2},\dots,\nu_{x^n},\nu),
 \]
where \lq$\det$\rq\ is the determinant function of $(\K^{n+1})^*$
under the canonical identification $(\K^{n+1})^*\cong \K^{n+1}$,
and $(x^1,\dots,x^n)$ is a local coordinate system of $M^n$.
 Then the singular set $S(\G)$ of $\G$ is just the zeros of $\mu$.
 By Theorem~\ref{thm:actual} and Fact~\ref{eq:fact_suy},
 our criteria for $A_{k+1}$-inflection points
 (resp.\ $A_{k+1}$-singular points) are completely 
 determined by the pair $(\xi, I(F))$
 (resp.\ the pair $(\eta,S({\mathcal G}))$).
 Hence it is sufficient to
 show the following three assertions (1)--(3). 
\begin{it}
 \begin{enumerate}
  \item\label{ass:1} $I(F)=S(\mathcal G)$,
  \item\label{ass:2} For each $p\in I(F)$, $p$ is a nondegenerate
       inflection point of $F$ if and only if
       it is a nondegenerate singular point of $\G$.
  \item\label{ass:3} The asymptotic direction of each nondegenerate
       inflection point $p$ of $F$ is equal to
       the null direction of $p$ as a singular point of $\G$.
 \end{enumerate}
\end{it}
 Let  $H=\sum_{i,j=1}^n h_{ij}dx^i\, dx^j$ be the Hessian form of
 $F$. 
Then we have that
 \begin{equation}\label{eq:split}
  \pmt{%
   h_{11}& \dots & h_{1n}& *\\
   \vdots & \ddots & \vdots & \vdots \\
   h_{n1}& \dots & h_{nn} & *\\
   0 & \dots & 0 & \nu\cdot \trans\nu
  }
  =
  \pmt{%
   \nu_{x^1}\\ \vdots\\ \nu_{x^n}\\ \nu
  } 
  (F_{x^1},\dots,F_{x^n},\trans\nu),
 \end{equation}
where $\nu\cdot \trans\nu=\sum_{j=1}^{n+1}(\nu^j)^2$ and
$\nu=(\nu^1,...,\nu^n)$ as a row vector.
Here, we consider a vector in $\K^n$ (resp.\ in $(\K^n)^*$)
 as a column vector (resp.\ a row vector), and 
$\trans{(\cdot)}$ denotes the transposition.
 We may assume that
 $\nu(p)\cdot\trans\nu(p)\ne0$ by a suitable 
affine transformation of $\K^{n+1}$,
 even when $\K=\C$.
 Since the matrix $(F_{x^1},\dots,F_{x^n},\trans\nu)$ is regular,
 \ref{ass:1} and \ref{ass:2} follow by taking the determinant of
 \eqref{eq:split}.
 Also by \eqref{eq:split}, $\sum_{i=1}^n a_ih_{ij}=0$
 for all $j=1,\dots,n$ holds if and only if 
 $\sum_{i=1}^n a_i\nu_{x^i}=\zv$, which proves \ref{ass:3}. 
\end{proof}
\begin{proof}[Proof of Proposition~\ref{prop:A}]
 Similar to the proof of Theorem \ref{thm:A}, 
 it is sufficient to show the following properties,
 by virtue of Theorem~\ref{thm:actual}.
\begin{it}
\renewcommand{\theenumi}{$(\arabic{enumi}')$}
\renewcommand{\labelenumi}{$(\arabic{enumi}')$}
\begin{enumerate}
 \item\label{ass:1a}
      $S(F)=I(\G)$, that is, the set of singular points of $F$
      coincides with the set of inflection points of the affine 
Gauss map.
 \item\label{ass:2a}
      For each $p\in I({\G})$, $p$ is a nondegenerate inflection point
      if and only if it is a nondegenerate singular point
      of $F$.
 \item\label{ass:3a}
      The asymptotic direction of each nondegenerate inflection point 
      coincides with the null direction of $p$ as a singular point of 
      $F$.
 \end{enumerate}
\end{it}
 Since $\G$ is an immersion, \eqref{eq:split} implies that
 \begin{align*}
  I(\G) &= \{p\,;\,(F_{x^1},\dots,F_{x^n},\trans\nu) 
            \text{ are linearly dependent at $p$}\}\\
       &= \{p\,;\, \lambda(p)=0\}
        \qquad(\lambda:=\det(F_{x^1},\dots,F_{x^n},\trans\nu)).
 \end{align*}
 Hence we have \ref{ass:1a}.
 Moreover, $h=\det(h_{ij})=\delta\lambda$ holds, where $\delta$
 is a function on $U$ which never vanishes on a neighborhood of $p$.
 Thus \ref{ass:2a} holds.
 Finally, by \eqref{eq:split}, $\sum_{j=1}^n b_j h_{ij}=0$
 for $i=1,\dots,n$ if and only if $\sum_{j=1}^n b_j F_{x^j}=\zv$,
 which proves \ref{ass:3a}.
\end{proof}

\begin{example}[$A_2$-inflection points on cubic curves]
 Let $\gamma(t):=\trans{(x(t),y(t))}$
 be a $\K$-differentiable curve in $\K^2$.
 Then $\nu(t):=(-\dot y(t),\dot x(t))\in (\K^2)^*$
 gives a normal vector, and
\[
   h(t)=\nu(t)\cdot \ddot\gamma(t)=\det(\dot \gamma(t),
\ddot \gamma(t))
 \]
 is the Hessian function.
 Thus $t=t_0$ is an $A_2$-inflection point if and only if 
 \[
   \det(\dot \gamma(t_0),\ddot \gamma(t_0))=0,\qquad
   \det(\dot \gamma(t_0),\dddot \gamma(t_0))\ne 0.
 \]
 Considering $\K^2\subset P(\K^3)$ as an affine subspace,
 this criterion is available for curves in $P(\K^3)$.
 When $\K=\C$,
 it is well-known that non-singular cubic curves 
in $P(\C^3)$
have exactly nine inflection points which are all of $A_2$-type.
 One special singular cubic curve is
 $2y^2-3x^3=0$
 in $P(\C^3)$ with homogeneous coordinates $[x,y,z]$,
 which can be parameterized as
 $\gamma(t)=[\sqrt[3]{2}t^2,\sqrt{3}t^3,1]$.
 The image of the dual curve of $\gamma$ in $P(\C^3)$ is 
 the image of $\gamma$ itself,
 and $\gamma$ has an $A_2$-type singular point
$[0,0,1]$ and
 an $A_2$-inflection point $[0,1,0]$. 

 These two points are interchanged by the duality.
 (The duality of fronts is explained in Section~\ref{sec:dual}.)
\end{example}

\begin{example}[The affine Gauss map of an $A_4$-inflection point]
 Let $F:\K^3\to\K^4$ be a map defined by
 \[
    F(u,v,w)
    =
      \raisebox{0.3cm}{${}^t$}\!\!\!
        \left(
          w,u,v,
          -u^2-\dfrac{3v^2}{2}+uw^2+vw^3
          -\dfrac{w^4}{4}+\dfrac{w^5}{5}-\dfrac{w^6}{6}
        \right)
            \qquad (u,v,w\in \K).
 \]
 If we define $\G:\K^3\to P(\K^4)\cong P((\K^4)^{*})$ by
 \[
     \G(u,v,w) =
        [-2uw-3vw^2+w^3-w^4+w^5,2u-w^2,3v-w^3,1]
 \]
 using the homogeneous coordinate system,
$\G$ gives the  affine Gauss map of $F$.
 Then the Hessian $h$ of $F$ is
 \[
   \det
   \pmt{
      -2&0&2w\\
       0&-3&3w^2\\
       2w&3w^2&2u+6vw-3w^2+4w^3-5w^4
   }
   =6(   2u+6vw-w^2+4w^3-2w^4).
 \]
 The asymptotic vector field is $\xi=(w,w^2,1)$.
 Hence we have
 \begin{align*}
  h&=6(2u+6vw-w^2+4w^3-2w^4),\\
  h'&=12(3v+6w^2-w^3),\qquad
  h''=144w,\qquad 
  h'''=144,
 \end{align*}
 where $h'=dh(\xi)$, $h''=dh'(\xi)$ and $h'''=dh''(\xi)$.
 The Jacobi matrix of
 $(h,h',h'')$ at $\zv$ is
 \[
    \pmt{%
     2 & * & * \\ 
     0 & 36 & * \\ 
     0 & 0 & 144
     }.
 \]
 This implies that $\xi$  is $3$-nondegenerate at $\zv$
 but does not meet $I(F)=h^{-1}(0)$  at $p$ with multiplicity 
 $4$, that is,
 $F$ has an $A_{4}$-inflection point at $\zv$.
 On the other hand, 
 $\G$ has the ${A}_3$-Morin singularity  at $\zv$.
 In fact, by the coordinate change
 \[
       U=2u-w^2,\qquad V=3v-w^3,\qquad W=w,
 \]
 it follows that $\G$ is represented by a map germ
 \[
    (U,V,W)\longmapsto
            -(UW+VW^2+W^4,U,V).
 \]
 This coincides with the typical ${A}_3$-Morin singularity
 given in (A.3) in \cite{SUY3}.
\end{example}

\section{Duality of wave fronts}
\label{sec:dual}

Let $P(\K^{n+2})$ be the $(n+1)$-projective space over $\K$.
We denote by $[x]\in P(\K^{n+2})$ the projection of a vector
$x=\trans(x^0,\dots,x^{n+1})\in \K^{n+2}\setminus\{\boldsymbol{0}\}$.
Consider a $(2n+3)$-submanifold of $\K^{n+2}\times (\K^{n+2})^*$
defined by
\[
  \widetilde C:=
       \{(x,y)\in \K^{n+2}\times (\K^{n+2})^*\,;\, x\cdot y=0\},
\]
and also a $(2n+1)$-submanifold of 
$P(\K^{n+2})\times P((\K^{n+2})^*)$
\[
  C:=\{([x],[y])\in P(\K^{n+2})\times P((\K^{n+2})^*)
\,;\, x\cdot y=0\}.
\]
As $C$ can be canonically identified with the projective tangent
bundle $PTP(\K^{n+2})$,
it has a canonical contact structure:
Let $\pi:\widetilde C\to C$ be the canonical projection,
and define a $1$-from
\[
  \omega:=\sum_{j=0}^{n+1} \left(x^jdy^j-y^j d{x^j}\right),
\]
which is considered as a $1$-form of $\widetilde C$.
The tangent vectors of the curves $t\mapsto (tx,y)$ and $t\mapsto (x,ty)$
at $(x,y)\in \widetilde C$ generate the kernel of $d\pi$.
Since these two vectors also belong to the kernel of $\omega$ 
and $\dim(\ker\omega)=2n+2$,
\[
   \Pi:=d\pi(\ker \omega)
\]
is a $2n$-dimensional vector subspace of $T_{\pi(x,y)}C$.
We shall see that $\Pi$ is the contact structure on $C$.
One can check that it coincides with the canonical contact structure 
of $PTP(\K^{n+2})$ ($\cong C$).
Let $U$ be an open subset of $C$ and $s:U\to \K^{n+2}
\times (\K^{n+2})^*$
a section of the fibration $\pi$.
Since $d\pi\circ ds$ is the identity map,  it can be easily checked
that $\Pi$ is contained in the kernel of the $1$-form $s^*\omega$.
Since $\Pi$ and the kernel of the $1$-form $s^*\omega$ are the same
dimension, they coincide.
Moreover, suppose that $p=\pi(x,y)\in C$ satisfies $x^i\ne 0$ and
$y^j\ne 0$.
We then consider a map of $\K^{n+1}\times (\K^{n+1})^*
\cong \K^{n+1}\times \K^{n+1}$ into 
$\K^{n+2}\times(\K^{n+2})^*\cong \K^{n+2}\times \K^{n+2}$
defined by
\[
 (a^0,\dots,a^{n},b^0,\dots,b^n)
  \mapsto 
  (a^0,\dots,a^{i-1},1,a^{i+1},\dots,a^n,b^0,\dots,b^{j-1},1,b^{j+1},\dots,b^n),
\]
and denote by $s_{i,j}$ the restriction of the map to the
neighborhood of $p$ in $C$.
Then one can easily check that
\[
  s_{i,j}^*
  \left[
       \omega\wedge \left(\bigwedge\nolimits^{n}d\omega\right)
  \right]
\]
does not vanish at $p$.
 Thus $s_{i,j}^*\omega$ is a contact form, and the 
hyperplane field $\Pi$ defines
a canonical contact structure on $C$.
Moreover, the two projections from $C$
into $P(\K^{n+2})$ are both Legendrian fibrations,
namely we get a double Legendrian fibration.
Let $f=[F]:M^{n}\to P(\K^{n+2})$ be a front.
Then there is a Legendrian immersion of the form
$L=([F],[G]):M^{n}\to C$.
Then $g=[G]:M^{n}\to P((\K^{n+2})^*)$
satisfies \eqref{eq:FG1} and \eqref{eq:FG2}.
In particular, $L:=\pi(F,G):M^{n}\to C$ gives a Legendrian immersion,
and $f$ and $g$ can be regarded as mutually dual
wave fronts as projections of $L$.

\begin{proof}[Proof of Theorem~\ref{thm:B}]
 Since our contact structure on $C$ can be identified with
 the contact structure on the projective tangent bundle
 on $P(\K^{n+2})$, we can apply the criteria
 of $A_k$-singularities as in Fact \ref{eq:fact_suy}.
 Thus a nondegenerate singular point $p$ is an $A_k$-singular point of
 $f$ if and only
 if the null vector field $\eta$ of $f$ (as a wave front)
 is $(k-1)$-nondegenerate
 at $p$, but does not meet the 
hypersurface $S(f)$ with multiplicity $k$ at $p$.
 Like as in the proof of Theorem~\ref{thm:A},
 we may assume that
 $\trans F(p)\cdot F(p)\ne0$ and $G(p)\cdot \trans G(p)\ne0$
 simultaneously by a suitable affine transformation of $\K^{n+2}$,
 even when $\K=\C$.
 Since $(F_{x^1},\dots,F_{x^n}, F, \trans G)$ is a regular
 $(n+2)\times (n+2)$-matrix
 if and only if $f=[F]$ is an immersion, 
 the assertion immediately follows from the identity
 \begin{equation}\label{eq:split2}
  \pmt{
    h_{11}& \dots & h_{1n}& 0 & *\\
    \vdots & \ddots & \vdots & \vdots & \vdots  \\
    h_{n1}& \dots & h_{nn} & 0 & *\\
    0 & \dots & 0 & 0 & G\cdot \trans G \\
    * & \dots & * & \trans F\cdot F & 0
   }=
   \pmt{
     G_{x^1}\\ \vdots\\ G_{x^n}\\ G \\ \trans F
   } 
   (F_{x^1},\dots,F_{x^n}, F, \trans G).
 \end{equation}
\end{proof}

\begin{proof}[Proof of Theorem~\ref{thm:C}]
 Let $g:M^2\to P((\R^4)^*)$ be the dual of $f$.
We fix $p\in M^2$ and take a simply connected
 and connected neighborhood $U$ of $p$.

 Then there are lifts $\hat f,\hat g:U\to S^3$ into the unit sphere $S^3$
 such that 
 \[
    \hat f\cdot \hat g=0,\qquad 
    d\hat f(v)\cdot \hat g=d\hat g(v)\cdot \hat f=0 \quad (v\in TU),
 \]
 where $\cdot$ is the canonical inner product on $\R^4\supset S^3$.
 Since $\hat f\cdot \hat f=1$,
 we have
 \[
      d\hat f(v)\cdot \hat f(p)
          =0\qquad (v\in T_pM^2).
 \]
 Thus
 \[
       d\hat f(T_pM^2)=\{\zeta \in S^3\,;\,
       \zeta\cdot \hat f(p)=\zeta \cdot \hat g(p)=0\},
 \]
 which implies that $df(TM^2)$ is  equal to  the limiting tangent bundle
 of the  front $g$.
 So we apply (2.5) in \cite{SUY2} for $g$:
Since the singular set $S(g)$ of $g$
consists only of cuspidal edges and swallowtails,
the Euler number of $S(g)$ vanishes.
Then it holds that
 \[
    \chi(M_+)+\chi(M_-)=\chi(M^2)=\chi(M_+)-\chi(M_-)
           +i_2^+(f)-i_2^-(f),
 \]
 which proves the formula.
\end{proof}

When $n=2$, the duality of fronts in the unit $2$-sphere $S^2$
(as the double cover of $P(\R^3)$) plays a crucial role 
for obtaining the
classification theorem in
\cite{MU}
for complete flat fronts with embedded ends in $\R^3$.
Also, a relationship between the number of inflection points 
and the number of double tangents on certain class of simple closed
regular curves in $P(\R^3)$ is given in \cite{tu3}.
(For the geometry and a duality of fronts in $S^2$, see \cite{A}.)
In \cite{porteous}, Porteous investigated the duality between 
$A_k$-singular points and $A_k$-inflection points when $k=2,3$
on a surface in $S^3$.

\section{Cuspidal curvature on $3/2$-cusps}
\label{sec:cusp}

Relating to the duality between singular points
and inflection points, we introduce a curvature
on $3/2$-cusps of planar curves:

Suppose that $(M^2,g)$ is an oriented Riemannian manifold,
$\gamma:I\to M^2$ is a front, $\nu(t)$ is a unit normal vector
field, and $I$ an open interval.
Then $t=t_0\in I$ is a $3/2$-cusp if and only if $\dot\gamma(t_0)=\zv$
and $\Omega(\ddot\gamma(t_0),\dddot\gamma(t_0))\ne 0$,
where $\Omega$ is the unit $2$-form on $M^2$, that is, the Riemannian
area element, 
and the dot means the covariant derivative. 
When $t=t_0$ is a $3/2$-cusp, $\dot\nu(t)$ 
does not vanish
(if $M^2=\R^2$, it follows from Proposition $\mathrm A'$). 
Then we take the (arclength) parameter $s$ near $\gamma(t_0)$
so that  $|\nu'(s)|=\sqrt{g(\nu'(s),\nu'(s))}=1$
($s\in I$), where  $\nu'=d\nu/ds$.
Now we define the {\it cuspidal curvature} $\mu$ by
\[
 \mu:=
     2\operatorname{sgn}(\rho)\left. \sqrt{\left |{ds}/{d\rho}\right |}
     \right|_{s=s_0} \qquad (\rho=1/\kappa_g),
\]
where 
we choose the unit normal $\nu(s)$ 
so that it is smooth around $s=s_0$
($s_0=s(t_0)$).
If $\mu>0$ (resp. $\mu<0$), 
the cusp is called {\it positive} (resp.\ {\it negative}).
It is interesting phenomenon that the 
left-turning cusps have negative
cuspidal curvature, although the left-turning regular
curves have positive geodesic curvature (see Figure \ref{cusps}).
%%%%%%%%%%%%%%%%%%%%%%%%%%%%%%%%%%%%%
\begin{figure}
\begin{center}
        \includegraphics[width=4cm]{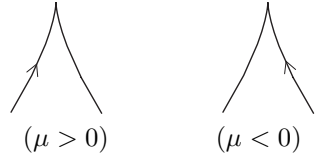}
\centerline{$(\mu>0)\qquad \qquad (\mu<0)$}
\caption{a positive cusp and a negative cusp}\label{cusps}
\end{center}
\end{figure}
%%%%%%%%%%%%%%%%%%%%%%%%%%%%%%%%%%%%%%
Then it holds that
\begin{equation}\label{eq:general3}
 \mu=    \left.\frac{\Omega(\ddot \gamma(t),\dddot \gamma(t))}
     {|\ddot \gamma(t)|^{5/2}}\right|_{t=t_0}
    = 
    \left. 2\frac{\Omega(\nu(t),\dot \nu(t))}{%
         \sqrt{|\Omega(\ddot\gamma(t),\nu(t))|}}
\right|_{t=t_0}.
\end{equation}
We now examine the case that $(M^2,g)$ is the Euclidean plane $\R^2$,
where $\Omega(v,w)$ ($v,w\in \R^2$) coincides with the determinant 
$\det(v,w)$
of
the $2\times 2$-matrix $(v,w)$.
A {\it cycloid} is a rigid motion of the
curve given by
$c(t):=a(t-\sin t,1-\cos t)$ ($a>0$),
and here $a$ is called the {\it radius\/} of the cycloid.
The cuspidal curvature of $c(t)$ at $t\in 2\pi \mathbf Z$
is equal to $-1/\sqrt{a}$.
In \cite{ume}, the second author proposed to consider
the  curvature as the inverse of radius of the cycloid
which gives the best approximation of the given $3/2$-cusp.
As shown in the next proposition, $\mu^2$
attains this property:
\begin{proposition}
 Suppose that $\gamma(t)$ has a $3/2$-cusp at $t=t_0$.
 Then by a suitable choice of the parameter $t$, 
 there exists a unique cycloid $c(t)$ such that
 \[
       \gamma(t)-c(t)=o((t-t_0)^3),
 \]
 where $o((t-t_0)^3)$ denotes a  higher order term than $(t-t_0)^3$.
 Moreover, the square of the absolute value of cuspidal 
curvature of $\gamma(t)$ at
 $t=t_0$ is equal to the inverse of the radius of the cycloid $c$.
\end{proposition}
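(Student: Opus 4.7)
The plan is to expand both $\gamma$ and the candidate cycloid to order three at the cusp, identify the obstruction to a direct match, and remove it by reparametrizing $\gamma$. Setting $t_0=0$ and writing $A:=\ddot\gamma(0)$, $B:=\dddot\gamma(0)$, the $3/2$-cusp hypothesis gives $\dot\gamma(0)=\zv$ and $\det(A,B)\ne 0$, so
\[
 \gamma(t)=\gamma(0)+\frac{t^2}{2}A+\frac{t^3}{6}B+o(t^3).
\]
The standard cycloid $\tilde a(t-\sin t,1-\cos t)$ ($\tilde a>0$) has second and third derivatives $(0,\tilde a)$ and $(\tilde a,0)$ at $t=0$, so any cycloid with cusp at $t=0$ has the form $c(t)=v+R\,\tilde a(t-\sin t,1-\cos t)$, and matching $\gamma(t)-c(t)=o(t^3)$ requires $v=\gamma(0)$ together with $R(0,\tilde a)=A$ and $R(\tilde a,0)=B$. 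The latter two equations force $A\perp B$ and $|A|=|B|=\tilde a$, which generically fails, so a direct match is impossible.

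To remove the obstruction I reparametrize $t=\phi(s)$ with $\phi(0)=0$, $\phi'(0)=\alpha\ne0$, $\phi''(0)=\beta$. Because $\dot\gamma(0)=\zv$, the derivatives $\phi^{(k)}(0)$ with $k\ge 3$ contribute nothing to the $3$-jet of $\gamma\circ\phi$ at $s=0$; only $\alpha$ and $\beta$ appear, via
\[
 \tilde A:=\alpha^2 A,\qquad \tilde B:=\alpha^3 B+3\alpha\beta A.
\]
Orthogonality $\tilde A\cdot\tilde B=0$ fixes $\beta=-\alpha^2(A\cdot B)/(3|A|^2)$; substituting this into $|\tilde A|=|\tilde B|$ and applying the planar identity $|A|^2|B|^2-(A\cdot B)^2=\det(A,B)^2$ collapses the magnitude condition to $\alpha^2=|A|^4/\det(A,B)^2$. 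Hence $\alpha$ is determined up to sign (the orientation of the parameter), $\beta$ is then uniquely fixed, the matching radius is
\[
 \tilde a=\alpha^2|A|=\frac{|A|^5}{\det(A,B)^2},
\]
and the translation and rotation of $c$ are uniquely determined by $c(0)=\gamma(0)$, $\ddot c(0)=\tilde A$, $\dddot c(0)=\tilde B$.

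The curvature statement then follows at once from \eqref{eq:general3}:
\[
 \mu=\frac{\det(\ddot\gamma(0),\dddot\gamma(0))}{|\ddot\gamma(0)|^{5/2}}=\frac{\det(A,B)}{|A|^{5/2}},
\]
so $\mu^2=\det(A,B)^2/|A|^5=1/\tilde a$, consistent with the paper's computation $\mu=-1/\sqrt{a}$ for the standard cycloid of radius $a$. The delicate points are (i) verifying that jets of $\phi$ of order $\ge 3$ really drop out of the $3$-jet match, which uses $\dot\gamma(0)=\zv$ crucially, and (ii) making the uniqueness statement precise modulo the sign of $\alpha$, which corresponds to reversing orientation (equivalently, a reflection of the cycloid, depending on whether \emph{rigid motion} in the paper's definition is taken to include reflections).
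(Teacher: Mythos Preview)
Your argument is correct. Both proofs compute the $3$-jet match, but via different normalizations. The paper first rotates $\gamma$ so that $\ddot\gamma(0)$ points along the positive $y$-axis, then uses the explicit substitution $s=t\sqrt{b(t)}$ (writing $\gamma(t)=t^2(a(t),b(t))$) to force the expansion $\gamma(s)=(\alpha s^3,s^2)+o(s^3)$; in this normal form the matching cycloid is visibly $\tfrac{2}{9\alpha^2}(t-\sin t,1-\cos t)$ with $s=t/(3\alpha)$, and the radius identity is read off. You instead keep $A,B$ general and solve algebraically for the $2$-jet $(\alpha,\beta)$ of the reparametrization, which is slightly longer but makes the uniqueness claim transparent: the paper's proof only exhibits one cycloid and does not verify uniqueness directly. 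Your observation about the sign of $\alpha$ is exactly the residual ambiguity; the paper absorbs it by a reflection, reducing to the case $\alpha>0$.
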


\begin{proof}
 Without loss of generality, we may set $t_0=0$ and $\gamma(0)=\zv$.
 Since $t=0$ is a singular point, 
 there  exist smooth functions $a(t)$ and $b(t)$  such that
 $\gamma(t)=t^2(a(t),b(t))$.
 Since $t=0$ is a $3/2$-cusp, $(a(0),b(0))\ne \zv$.
 By a suitable rotation of $\gamma$, we may assume that $b(0)\ne 0$ and
 $a(0)=0$.
 Without loss of generality, we may assume that  $b(0)>0$. 
 By setting $s=t\sqrt{b(t)}$,
 $\gamma(s)=\gamma(t(s))$ has the expansion
 \[
   \gamma(s)=(\alpha s^3, s^2)+o(s^3) \qquad (\alpha\ne 0).
 \]
 Since the cuspidal curvature changes sign by reflections on $\R^2$,
 it is sufficient to consider the case $\alpha>0$.
 Then, the cycloid
 \[
    c(t):=\frac{2}{9\alpha^2}(t-\sin t,1-\cos t)
 \]
 is the desired one by setting $s=t/(3\alpha)$.
\end{proof}
It is well-known that the cycloids are
the solutions of the brachistochrone problem.
We shall propose to call the number $1/|\mu|^2$ the 
{\it cuspidal curvature radius} which corresponds the 
radius of the best approximating cycloid $c$. 

\begin{remark}
During
the second author's stay at Saitama University,
Toshizumi Fukui pointed out the followings:
Let $\gamma(t)$ be a regular curve in $\R^2$ with
non-vanishing curvature function $\kappa(t)$.
Suppose that $t$ is the arclength parameter of $\gamma$.
For each $t=t_0$, there exists a unique cycloid
$c$ such that a point on $c$ gives the best
approximation of $\gamma(t)$ at $t=t_0$ (namely
$c$ approximates $\gamma$ up to the third jet at $t_0$).
The angle $\theta(t_0)$ between the axis
(i.e. the normal line of $c$ at the singular points)
 of the cycloid and the normal line of $\gamma$ at $t_0$ is 
given by
\begin{equation}\label{c:theta}
\sin \theta=\frac{\kappa^2}{\sqrt{\kappa^4+\dot\kappa^2}},
\end{equation}
and the radius $a$ of the cycloid is given by
\begin{equation}\label{c:k}
a:=\frac{\sqrt{\kappa^4+\dot\kappa^2}}{|\kappa|^3}.
\end{equation}
One can prove \eqref{c:theta} and \eqref{c:k} by straightforward
calculations. The cuspidal curvature radius can be considered as
the limit.
\end{remark}

\begin{added}
 In a recent authors' preprint,
 ``The intrinsic duality of wave fronts (arXiv:0910.3456)'',
 $A_{k+1}$-singularities are defined intrinsically.
 Moreover, the duality between fronts and their
 Gauss maps is also explained intrinsically.
\end{added}

\end{document}